\newtheorem{prethm}{{\bf Theorem}}
\newenvironment{theorem}{\begin{prethm}{\hspace{-0.5
em}{\bf.}}}{\end{prethm}}
\newtheorem{precon}{{\bf Conjecture}}
\newenvironment{con}{\begin{precon}{\hspace{-0.5
em}{\bf.}}}{\end{precon}}
\newtheorem{prelem}{{\bf Lemma}}
\newtheorem{preobs}{{\bf Observation}}
\newtheorem{preex}{{\bf Example}}
\newtheorem{presol}{{\bf Solution}}
\newtheorem{precor}{{\bf Corollary}}
\newenvironment{cor}{\begin{precor}{\hspace{-0.5
em}{\bf.}}}{\end{precor}}
\newtheorem{preproof}{{\bf Proof.}}
\newenvironment{proof}[1]{\begin{preproof}{\rm
#1}\hfill{$\rule{2mm}{2mm}$}}{\end{preproof}}
\newtheorem{prerem}{{\bf Remark}}
\newenvironment{remark}{\begin{prerem}{\hspace{-.9
em}{\bf.}}}{\end{prerem}}
\newcommand{\FTC}[1]{Introduction}
\date{}
\title{{\Large\bf Injective chromatic number of outerplanar graphs}}
\author{
{\sc Mahsa Mozafari-Nia} 
\ and
{\sc Behnaz Omoomi}
\\ [1mm]
{\small \it  Department of Mathematical Sciences}\\
{\small \it  Isfahan University of Technology} \\
{\small \it 84156-83111, \ Isfahan, Iran}}
\begin{document}

\maketitle

\begin{abstract}
An injective coloring of a graph is a vertex coloring where two vertices with common neighbor receive distinct colors. The minimum integer $k$ that $G$ has a $k-$injective coloring is called injective chromatic number of $G$ and denoted by $\chi_i(G)$. In this paper, the injective chromatic number of outerplanar graphs with maximum degree $\Delta$ and girth $g$ is studied. It is shown that for every outerplanar graph, $\chi_i(G)\leq \Delta+2$, and this bound is tight. Then, it is proved that for outerplanar graphs with  $\Delta=3$,  $\chi_i(G)\leq \Delta+1$ and the bound is tight for outerplanar graphs of girth three and $4$. Finally, it is proved that, the injective chromatic number of $2-$connected outerplanar graphs with $\Delta=3$, $g\geq 6$ and $\Delta\geq 4$, $g\geq 4$ is equal to $\Delta$.
\end{abstract}
\textbf {Keywords:} Injective coloring, Injective chromatic number, Outerplanar graph.

\section{\FTC{1}}
All graphs we have considered here are finite, connected and simple. A plane graph is a planar drawing of a planar graph in the
Euclidean plane. The vertex set, edge set, face set, minimum degree and maximum degree of a plane graph G, are denoted by $V(G)$, $E(G)$, $F(G)$, $\delta(G)$ and $\Delta(G)$, respectively. A vertex of degree $k$ is called a $k-$vertex. For vertex $v\in V(G)$, $N_G(v)$ is the set of neighbors of $v$ in $G$.
 The girth of a graph G, $g(G)$, is the length of a shortest cycle in $G$. If there is no confusion, we delete $G$ in the notations. A face $f \in F (G)$ is denoted by its boundary walk $f=[v_1v_2\ldots v_k]$, where $v_1,v_2,\ldots,v_k$ are its vertices in the clockwise order. Also, the vertices $v_1$ and $v_k$ as end vertices of $f$ are denoted by ${v_{_L}}_{_{_f}}$ and ${v_{_R}}_{_{_f}}$, respectively. An outerplanar graph is a graph with a planar drawing for which all vertices belong to the outer face of the drawing. It is known that a graph $G$ is an outerplanar graph if and only if $G$ has no subdivision of complete graph $K_4$ and complete bipartite graph $K_{2,3}$. A path $P:v_1,v_2,\ldots,v_k$ is called a simple path in $G$ if $v_2,\ldots,v_{k-1}$ are all $2-$vertices in $G$. The length of a path is the number of its edges. We say that a face $f=[v_1v_2\ldots v_k]$ is an end face of an outerplane graph $G$, if $P:v_1,v_2,\ldots,v_k$ is a simple path in $G$. An end block in graph $G$ is a maximal $2-$connected subgraph of $G$ contains an unique cut vertex of $G$.

A proper $k-$coloring of a graph $G$ is a mapping from $V(G)$ to the set of colors $\{1,2,\ldots,k\}$ such that any two adjacent
vertices have different colors. The chromatic number, $\chi(G)$, is a minimum integer $k$ that $G$ has a proper $k-$coloring.
A coloring $c$ of $G$ is called an {\it{injective coloring}} if for every two vertices $u$ and $v$ which have common neighbor, $c(u)\neq c(v)$. That means, the restriction of $c$ to the neighborhood of any vertex is an injective function. The {\it{injective chromatic number}}, $\chi_i(G)$, is the least integer $k$
such that $G$ has an injective $k-$coloring. Note that an injective coloring is not necessarily a proper coloring. In fact, $\chi_i(G)=\chi(G^{(2)})$, where $V(G^{(2)})=V(G)$ and $uv \in E(G^{(2)})$ if and only if $u$ and $v$ have a common neighbor in $G$.
 The square of graph $G$, denoted by $G^2$, is a graph with vertex set $V(G)$, where two vertices are adjacent in $G^2$ if and only if they are at distance at most two in $G$. Since $G^{(2)}$ is a subgraph of $G^2$, obviously, $\chi_i(G)\leq \chi(G^2)$. The concept of injective coloring is introduced by Hahn et al.  in $2002$ \cite{Ontheinjectivechromaticnumberofgraphs}.
It is clear that for every graph $G$, $\chi_i(G)\geq \Delta$. In general, in \cite{Ontheinjectivechromaticnumberofgraphs} Hahn et al. proved that $\Delta\leq \chi_i(G)\leq \Delta^2-\Delta+1$.\\
 In \cite{Graphswithgivendiameterandacoloringproblem}, Wegner expressed the below conjecture for the chromatic number of the square of planar graphs.
\begin{con} {\em{\cite{Graphswithgivendiameterandacoloringproblem}}}\label{c2}
If $G$ is a planar graph with maximum degree $\Delta$, then
\begin{itemize}
\item{For $\Delta=3$, $\chi(G^2)\leq \Delta+2$.}
\item{For $4\leq \Delta\leq 7$, $\chi(G^2)\leq \Delta+5$.}
\item{For $\Delta\geq 8$, $\chi(G^2)\leq \lfloor\frac{3\Delta}{2}\rfloor +1$.}
\end{itemize}
\end{con}
Since $\chi_i(G)\leq \chi(G^2)$, Lu\v{z}ar and \v{S}krekovski in \cite{Counterexamplestoaconjectureoninjectivecoloring} proposed the following conjecture for the injective chromatic number of planar graphs. 
\begin{con} {\em{\cite{Counterexamplestoaconjectureoninjectivecoloring}}}\label{c3}
If $G$ is a planar graph with maximum degree $\Delta$, then
\begin{itemize}
\item{For $\Delta=3$, $\chi_i(G)\leq \Delta+2$.}
\item{For $4\leq \Delta\leq 7$, $\chi_i(G)\leq \Delta+5$.}
\item{For $\Delta\geq 8$, $\chi_i(G)\leq \lfloor\frac{3\Delta}{2}\rfloor +1$.}
\end{itemize}
\end{con}
The injective coloring of planar graphs with respect to its girth and maximum degree is studied in \cite{ColoringsofplanegraphsAsurvey,INJECTIVECOLORINGOFPLANARGRAPHSWITHGIRTH6,Injectivecoloringofplanargraphs,INJECTIVECOLORINGOFPLANARGRAPHSWITHGIRTH7,njectivecoloringofplanargraphswithgirth6,Injectivecoloringofplanegraphswithgirth5,Noteonplanargraphswithlargestinjectivechromaticnumbers,Injectivecoloringsofplanargraphswithfewcolors}.
In \cite{COLORINGTHESQUAREOFANOUTERPLANARGRAPH}, Lih and Wang proved upper bound $\Delta+2$ for the chromatic number of square of outerplanar graphs.
\begin{theorem} {\em {\cite{COLORINGTHESQUAREOFANOUTERPLANARGRAPH}}}\label{t1}
If $G$ is an outerplanar graph, then $\chi(G^2)\leq \Delta+2$.
\end{theorem}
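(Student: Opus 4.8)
\medskip\noindent\textbf{Proof strategy.} I would argue by induction on $|V(G)|$, or equivalently look at a counterexample with the fewest vertices. If $\Delta(G)\le 2$, then $G$ is a path or a cycle and $\chi(G^2)\le 4=\Delta+2$ by inspection, with the single exception $G=C_5$ (for which the inequality fails, so the statement is to be read with $\Delta\ge 3$, or with $C_5$ excluded). Hence from now on $\Delta:=\Delta(G)\ge 3$, and every colouring below uses the palette $\{1,\ldots,\Delta+2\}$.

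\medskip\noindent\textbf{Step 1 (reduction to $2$-connected graphs).} Let $v$ be a cut vertex and write $G=G_1\cup G_2$ with $V(G_1)\cap V(G_2)=\{v\}$, each $G_i$ outerplanar with $\Delta(G_i)\le\Delta$ and fewer vertices than $G$; by induction $G_i^2$ has a $(\Delta+2)$-colouring $c_i$. Every path between $x\in V(G_1)\setminus\{v\}$ and $y\in V(G_2)\setminus\{v\}$ meets $v$, so $x$ and $y$ are adjacent in $G^2$ exactly when both lie in $N_G(v)$, and $G^2$ restricted to $V(G_i)$ equals $G_i^2$. In $G_i^2$ the set $N_{G_i}(v)\cup\{v\}$ is a clique, so $c_i$ uses $\deg_{G_i}(v)+1$ distinct colours there; as $\deg_{G_1}(v)+\deg_{G_2}(v)+1=\deg_G(v)+1\le\Delta+1$, we can compose $c_2$ with a permutation of the palette so that it agrees with $c_1$ at $v$ and gives $N_{G_2}(v)$ colours disjoint from those $c_1$ uses on $N_{G_1}(v)\cup\{v\}$. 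Then $c_1\cup c_2$ properly colours $G^2$, a contradiction; so $G$ is $2$-connected.

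\medskip\noindent\textbf{Step 2 (end faces of length at least four are reducible).} If $G$ has no chord it is a cycle, already handled. Otherwise the weak dual of the embedding is a tree with at least two leaves, so $G$ has an end face $f=[v_1v_2\ldots v_k]$ with chord $v_1v_k$ whose internal vertices $v_2,\ldots,v_{k-1}$ are $2$-vertices. Let $G'=G-\{v_2,\ldots,v_{k-1}\}$; the chord $v_1v_k$ survives, so $G'$ is $2$-connected outerplanar with $\Delta(G')\le\Delta$, and, using that chord, one checks that every edge of $G^2$ with both ends in $V(G')$ is an edge of $(G')^2$. Therefore a $(\Delta+2)$-colouring of $(G')^2$ (which exists by induction) is valid on $G^2-\{v_2,\ldots,v_{k-1}\}$, and it remains to colour $v_2,\ldots,v_{k-1}$. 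Assume $k\ge 4$. For $3\le i\le k-2$ one has $N_{G^2}(v_i)=\{v_{i-2},v_{i-1},v_{i+1},v_{i+2}\}$, while $N_{G^2}(v_2)\subseteq\{v_1,v_3,v_4\}\cup N_G(v_1)$ and $N_{G^2}(v_{k-1})\subseteq\{v_{k-3},v_{k-2},v_k\}\cup N_G(v_k)$. Colour $v_2$ and $v_{k-1}$ first: each of them then has at most $\Delta$ already-coloured $G^2$-neighbours (namely, among $v_1$ together with its neighbours, resp.\ $v_k$ together with its neighbours). Then colour $v_3,\ldots,v_{k-2}$ in any order: each then has at most four already-coloured $G^2$-neighbours. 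Since $\max\{\Delta,4\}\le\Delta+1<\Delta+2$, the colouring extends, contradicting minimality. Hence every end face of $G$ is a triangle.

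\medskip\noindent\textbf{Step 3 (triangular end faces, and the main obstacle).} Now every end face is a triangle $[v_1v_2v_3]$ with $v_2$ a $2$-vertex and $v_1v_3\in E(G)$, so $N_{G^2}(v_2)=\bigl(N_G(v_1)\cup N_G(v_3)\bigr)\setminus\{v_2\}$; since $G$ contains no $K_{2,3}$ subgraph, $v_1$ and $v_3$ have at most two common neighbours, whence $|N_{G^2}(v_2)|\le\deg_G(v_1)+\deg_G(v_3)-2$. If $\Delta=3$ this is at most $4<5$, so deleting $v_2$ and recolouring contradicts minimality, and the case $\Delta=3$ is finished. For $\Delta\ge 4$, however, $\deg_G(v_1)+\deg_G(v_3)-2$ may equal $\Delta+2$, so a single deleted $2$-vertex need not be reducible; this is the crux of the theorem, and where all of the case analysis will live. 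I would finish it by one (or a combination) of: (i) a recolouring argument — identify the outermost neighbour of $v_1$ and the outermost neighbour of $v_3$ along the outer cycle on the side away from $v_2$, show they are non-adjacent in $(G')^2$, and recolour $G'$ so that they share a colour, freeing a colour for $v_2$; (ii) enlarging the reducible configuration — treat a maximal fan of consecutive triangular end faces around a common vertex of degree $\Delta$ as one configuration, delete all its apex $2$-vertices at once, and recolour inward from the two ends of the fan; (iii) discharging — move charge from $\Delta$-vertices to the $2$-vertices bordering them, to show that a $2$-connected outerplane graph with $\Delta\ge 4$ avoiding the configurations in (i) and (ii) cannot exist. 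I expect Steps 1--2 and the $\Delta=3$ subcase to be routine and essentially all of the difficulty to be concentrated here.
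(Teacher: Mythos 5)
This is a statement the paper quotes from Lih and Wang without proof, so there is no in-paper argument to compare against; I am judging your proposal on its own terms. Your Step 1 (splitting at a cut vertex and permuting the palette of one part so that the two colourings agree at $v$ and use disjoint colours on $N_{G_1}(v)$ and $N_{G_2}(v)$) and Step 2 (greedily recolouring the interior $2$-vertices of an end face of length at least four, ends first) are both sound, modulo the standard fix of running the induction over ``outerplanar graphs of maximum degree at most $\Delta$'' for a fixed $\Delta\ge 3$ rather than letting $\Delta$ vary with the subgraph (otherwise the $C_5$ exception you correctly flag re-enters through subgraphs such as a pendant $5$-cycle). The $\Delta=3$ half of Step 3 is also fine.

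The genuine gap is exactly where you say it is, and it is not a small one: for $\Delta\ge 4$ the case of a triangular end face $[v_1v_2v_3]$ with $\deg(v_1)+\deg(v_3)-2\ge\Delta+2$ is the entire content of the theorem, and you do not prove it --- you list three candidate strategies (recolouring so that two specific vertices of $G'$ share a colour, deleting a whole fan of triangles at once, or discharging) without executing any of them. None is obviously routine. For instance, in strategy (i) the two ``outermost'' neighbours of $v_1$ and $v_3$ can themselves lie in a common triangle or share a neighbour of large degree, in which case they are adjacent in $(G')^2$ and cannot be identified in colour; and the hard configuration is precisely a maximal fan of triangles around a vertex of degree $\Delta$, where the apex's $\Delta$ neighbours already form a clique in $G^2$ and the $2$-vertices of the fan see almost all of them. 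Lih and Wang's actual proof handles this with a careful structural/ordering analysis of such fans; your strategy (ii) is the right instinct, but until the recolouring of the fan is written out and shown to close (including the interaction between the two ends of the fan and the rest of $G'$), the proposal is a correct reduction to the hard case rather than a proof. As written, Steps 1--2 and the $\Delta=3$ subcase stand; the theorem for $\Delta\ge 4$ does not.
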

Since $\chi_i(G)\leq \chi(G^2)$, Conjecture~\ref{c3} is true for outerplanar graphs.
\begin{cor}\label{c1}
If $G$ is an outerplanar graph, then $\chi_i(G)\leq \Delta+2$.
\end{cor}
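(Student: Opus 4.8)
The plan is to derive the corollary immediately from Theorem~\ref{t1} together with the elementary comparison between the two-step graph $G^{(2)}$ and the square $G^2$ that is already recorded in the introduction. First I would recall that, by definition, $\chi_i(G)=\chi(G^{(2)})$, where $uv\in E(G^{(2)})$ precisely when $u$ and $v$ have a common neighbour in $G$. Every such pair of vertices lies at distance at most $2$ in $G$, so $G^{(2)}$ is a spanning subgraph of $G^2$, and hence $\chi_i(G)=\chi(G^{(2)})\le \chi(G^2)$. Since $G$ is outerplanar, Theorem~\ref{t1} gives $\chi(G^2)\le \Delta+2$, and combining the two inequalities yields $\chi_i(G)\le \Delta+2$, as required.

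There is essentially no obstacle here: the only thing to verify is the subgraph inclusion $G^{(2)}\subseteq G^2$, which is immediate from the definitions, and the remaining step is just a citation of Lih and Wang's theorem. Consequently the genuine work of the paper lies not in this corollary but in the sharper statements announced in the abstract — that the bound $\Delta+2$ is actually attained, that $\chi_i(G)\le \Delta+1$ when $\Delta=3$, and that $\chi_i(G)=\Delta$ for $2$-connected outerplanar graphs under the stated girth conditions.

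For completeness I would also mention an alternative, self-contained route that bypasses Theorem~\ref{t1}: induct on $|V(G)|$. A connected outerplanar graph that is not a single edge has an end block, and within that end block an end face whose boundary walk contains a simple path; this forces the presence of a locally sparse low-degree configuration (a $2$-vertex, or two consecutive $2$-vertices, sitting near the cut structure). One deletes such a configuration, colours the remaining outerplanar graph with $\Delta+2$ colours by the induction hypothesis, and then checks that each deleted vertex is constrained by at most $\Delta+1$ colours under the injective requirement, so a free colour always survives. The delicate point on this route would be organising the case analysis of end faces so that the number of injective constraints on the deleted vertices never reaches $\Delta+2$; but since Theorem~\ref{t1} is already available, the one-line deduction above is the natural proof of the corollary.
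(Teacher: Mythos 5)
Your proposal is correct and is exactly the paper's argument: the corollary is deduced in one line from Theorem~\ref{t1} via the inclusion $G^{(2)}\subseteq G^2$, which gives $\chi_i(G)\le\chi(G^2)\le\Delta+2$. The alternative inductive route you sketch is unnecessary here, as the paper likewise treats this as an immediate consequence of Lih and Wang's theorem.
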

In Figure~\ref{ff}, an outerplanar graph with $\Delta=4$, $g=3$ and $\chi_i(G)=\Delta+2=6$ is shown. Therefore, the given bound in Corollary~\ref{c1} is tight.
\begin{figure}[ht]
\centering
\includegraphics[scale=.7]{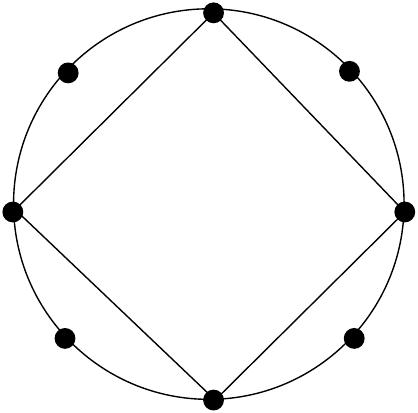} 
\caption{An outerplanar graph with $\Delta=4$, $g=3$ and $\chi_i=6$.}\label{ff}
\end{figure}

In this paper, we study the injective chromatic number of outerplanar graphs. The main results of Section~$2$ are as follows. \\
If $G$ is an outerplanar graph with maximum degree $\Delta$ and girth $g$, then
\begin{itemize}
\item{For $\Delta=3$ and $g\geq 3$, $\chi_i(G)\leq \Delta+1=4$}. (Theorem~\ref{t15})
\item{For $\Delta= 3$ and $g\geq 5$, with no face of degree $k$, $k\equiv 2$ \rm{(}mod $4$\rm{)}, $\chi_i(G)=\Delta$.} (Theorem~\ref{t16})
\item{For $\Delta=3$ and $g\geq 6$, $\chi_i(G)=\Delta$.} (Theorem~\ref{t20})
\item{For $\Delta\geq 4$ and $g\geq 4$, $\chi_i(G)=\Delta$.} (Theorems~\ref{t7} and \ref{t14})
\end{itemize}
\section{Main Results}
First, we prove a tight bound for the injective chromatic number of outerplanar graphs with $\Delta=3$. Note that if $\Delta=2$, then $G$ is an union of paths and cycles, which obviously \linebreak{$\chi_i(G)\leq 3=\Delta+1$}. Moreover, if $G$ is a path or an even cycle, then $\chi_i(G)=2$; and if $G$ is an odd cycle, then $\chi_i(G)=3$ \cite{Ontheinjectivechromaticnumberofgraphs}.
\begin{theorem}\label{t15}
If $G$ is an outerplanar graph with  $\Delta=3$ and $g\geq 3$, then $G$ has a $4-$injective coloring such that in every simple path of lenght three, at most three colors appear. Moreover, the bound is tight.
\end{theorem}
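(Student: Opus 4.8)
The plan is to induct on the number of vertices of $G$, peeling off an end block (or a pendant structure at a cut vertex) and extending a coloring of the smaller graph. First I would reduce to the $2$-connected case: if $G$ has a cut vertex, let $B$ be an end block with cut vertex $u$. By induction $G-(B-u)$ has the desired coloring; I would then need to color $B$ so that the injective constraints around $u$ and along the simple path through $u$ in $B$ are respected, using that $B$ itself is an outerplane graph with $\Delta(B)\le 3$ and that the color of $u$ together with at most the colors of its (at most two) neighbors outside $B$ are the only external constraints. Since we have $4$ colors and the degree condition, there is enough slack.

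For the $2$-connected case, the key structural fact is that a $2$-connected outerplane graph with $\Delta=3$ has an end face $f=[v_1v_2\cdots v_k]$, i.e. a face whose boundary minus the edge $v_1v_k$ is a simple path of $2$-vertices. I would delete the interior $2$-vertices $v_2,\dots,v_{k-1}$ (or contract the path, depending on parity) to obtain a smaller outerplane graph $G'$ with $\Delta(G')\le 3$, apply the induction hypothesis to get a $4$-injective coloring of $G'$ in which every simple path of length three uses at most three colors, and then re-insert the deleted vertices. Coloring a path of $2$-vertices injectively between two fixed endpoints $v_1,v_k$ (with $v_1,v_k$ themselves having at most one further neighbor apiece) is essentially a $1$-dimensional problem: consecutive vertices at distance two must differ, so we are properly coloring a path in $G^{(2)}$, and with $4$ colors available we can always complete it while also arranging that no window of four consecutive vertices along the simple path uses all four colors (the "at most three colors on a simple path of length three" condition is exactly what lets the gluing at $v_1$ and $v_k$ go through, and conversely it is the invariant we must maintain).

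The main obstacle I expect is the bookkeeping at the ends $v_1$ and $v_k$ of the end face: these vertices have degree $3$, so each has one neighbor outside the face, and the newly inserted path interacts with those outside neighbors through the common-neighbor relation. One must check that the number of forbidden colors at $v_2$ (from $v_1$ and $v_3$ and possibly $v_1$'s outside neighbor) and symmetrically at $v_{k-1}$ never reaches $4$, and that the length-three simple-path condition can be maintained across the junctions $v_1 v_2 v_3 v_4$ and $v_{k-3}v_{k-2}v_{k-1}v_k$. This likely splits into a few cases according to $k \bmod 4$ (mirroring the mod-$4$ condition appearing in Theorem~\ref{t16}) and according to whether $v_1$ and $v_k$ are adjacent or whether the face is short ($k=3$, handled separately as a triangle end face). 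Tightness is witnessed by exhibiting a small outerplanar graph with $\Delta=3$ requiring $4$ colors — for instance a suitable configuration of triangles sharing vertices, forcing three mutually "common-neighbored" vertices plus one more — which I would present as an explicit figure.
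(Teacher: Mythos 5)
Your overall strategy---induction on $|V(G)|$, peeling the interior $2$-vertices of an end face, recoloring them with a periodic pattern, and using the ``at most three colors on every simple path of length three'' condition as the invariant that guarantees a free color at the two junctions---is exactly the paper's proof, down to the mod-$4$ bookkeeping and the tightness example built from triangles.

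The one step that does not go through as you state it is your reduction to the $2$-connected case. You propose to color $G-(B-u)$ by induction and then ``color $B$'' compatibly with the already-fixed color of $u$ and the constraints from $u$'s neighbors outside $B$. That is a precoloring-extension problem on an entire $2$-connected outerplanar graph, which your induction hypothesis (which only produces \emph{some} good coloring of a smaller graph, with no colors prescribed in advance) does not deliver; ``there is enough slack because we have $4$ colors'' is not an argument, since essentially the whole difficulty of the theorem already lives inside a single $2$-connected block. The paper never removes a whole block: if some end block is a single edge it peels the maximal pendant simple path and colors it with the string $(sstt\ldots)$; otherwise it chooses an end face \emph{inside} an end block and deletes only that face's $2$-vertices, so the remaining graph is again an outerplanar graph of smaller order to which the induction hypothesis applies directly, and the extension step is always the one-dimensional path-recoloring you describe for the $2$-connected case. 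Your own end-face argument applies verbatim to an end face of an end block, so the repair is simply to drop the block-removal reduction and do the peeling inside the block.
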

\begin{proof}
{We prove the theorem by the induction on $|V(G)|$. In Figure~\ref{fff}, all outerplanar graphs with $\Delta=3$  and $g\geq 3$ of order $4$ and $5$ with an injective coloring with desired property are shown. Obviously, in the left side graph, $\chi_i(G)=4$. Hence, bound $\Delta+1$ is tight.
\begin{figure}[hbtp]
\centering
\includegraphics[scale=.8]{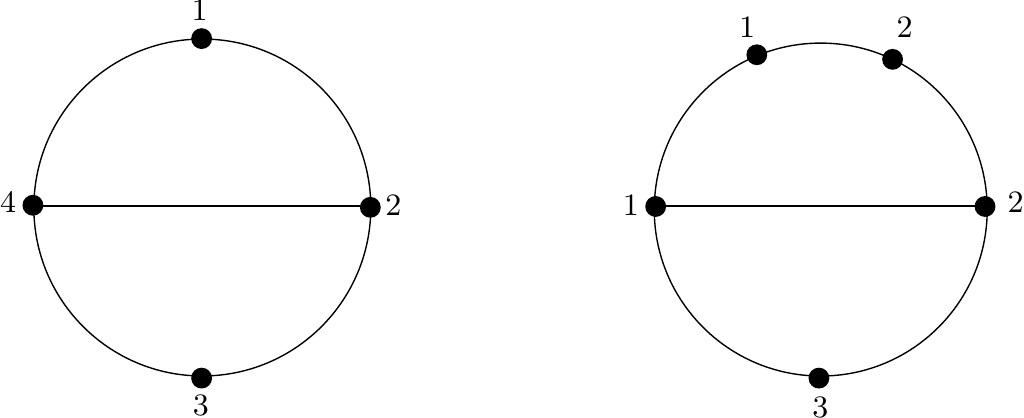}
\caption{Outerplanar graphs with $\Delta=3$ and $g\geq 3$ of order $4$, $5$.}\label{fff}
\end{figure}

Now suppose that $G$ is an outerplane graph with $\Delta=3$, $g\geq 3$ and the statement is true for all outerplanar graphs with $\Delta=3$ and 
$g\geq 3$ of order less than $|V(G)|$. The following two cases can be caused.\\
If an end block of $G$ is an edge, say $uv$, where $\deg(u)=1$, then we consider the maximal simple path $P:(v_1=u), (v_2=v),v_3,\ldots, v_k$ in $G$. Since $P$ is a maximal simple path and $\Delta(G)=3$, we have $\deg(v_k)=3$. Suppose that $N(v_k)=\{w_1,w_2,v_{k-1}\}$ and $c$ is a $4-$injective coloring of $G\setminus \{v_1,v_2,\ldots,v_{k-1}\}$ with colors $\{\alpha,\beta,\gamma,\lambda\}$ such that every simple path of length three has at most three colors. Note that $w_1$ and $w_2$ have a common neighbor $v_k$ therefore, $c(w_1)\neq c(w_2)$. In this case, we assign to the ordered vertices $v_{k-1},v_{k-2},\ldots,v_2,v_1$ of path $P$ the ordered string $(ssttsstt\ldots)$ where, $s\in \{\alpha,\beta,\gamma,\lambda\}\setminus\{c(v_k),c(w_1),c(w_2)\}$ and $t=c(v_k)$.

If the minimum degree of every end block of $G$ is at least two in $G$, then we consider an end face $f=[v_iv_{i+1} \ldots v_j]$ in an end block $B$ of $G$ in clockwise order, where $v_1$ is the vertex cut of $G$ belongs to $B$. Let $H$ be the induced subgraph of $G$ on $2-$vertices of $f$. By the induction hypothesis $G\setminus H$ has a $4-$injective coloring $c$ with colors $\{\alpha,\beta,\gamma,\lambda\}$, such that every simple path of length three has at most three colors. Hence, in $G\setminus H$ at most three colors are used for vertices $v_{i-1},v_i,v_j,v_{j+1}$.  Now we extend $c$ to an injective coloring of $G$ with the desired property.

If $c(v_i)=c(v_j)$, then we assign to the ordered vertices $v_{i+1},v_{i+2},\ldots,v_{j-1}$ the ordered string $(ssttsstt\ldots)$ where,
 $s\in\{\alpha,\beta,\gamma,\lambda\}\setminus\{c(v_{i-1}),c(v_i)=c(v_j),c(v_{j+1})\}$ and $t\in\{\alpha,\beta,\gamma,\lambda\}\setminus\{c(v_i)=c(v_j),c(v_{j+1}),s\}$. If $v_i=v_j$ and $j-i-1 \equiv 1,2$ {\rm(}mod $4${\rm)}, then change the color of $v_{j-1}$ to $t^\prime\in \{\alpha,\beta,\gamma,\lambda\}\setminus \{c(v_{i-1})=c(v_{j+1}),s,t\}$.
 
If $c(v_i)\neq c(v_j)$, then we assign to the ordered vertices $v_{i+1},v_{i+2},\ldots,v_{j-1}$ the ordered string $(ssttsstt\ldots)$ where, $s\in \{\alpha,\beta,\gamma,\lambda\}\setminus \{c(v_{i-1}),c(v_i),c(v_j),c(v_{j+1})\}$.
 If $j-i-1 \equiv 1,2$ {\rm(}mod $4${\rm)}, then $t\in \{\alpha,\beta,\gamma,\lambda\}\setminus \{c(v_j),s\}$.
 If $j-i-1 \equiv 0,3$ {\rm(}mod $4${\rm)}, then  $t\in \{\alpha,\beta,\gamma,\lambda\}\setminus \{c(v_i),c(v_{j+1}),s\}$. In the case $j-i-1\equiv 0$ {\rm(}mod $4${\rm)}, if $t=c(v_j)$, then change the color $v_{j-2}$ to $t^\prime\in \{\alpha,\beta,\gamma,\lambda\}\setminus \{c(v_j)=t,s\}$. Note that, since by the induction hypothesis $|\{c(v_{i-1}),c(v_i),c(v_j),c(v_{j+1})\}|\leq 3$, in each cases the colors $s$ and $t$ exist. It can be easily seen that the given coloring is a $4-$injective coloring for $G$ such that every simple path of length three in $G$ has at most three colors as well. }
\end{proof} 
Graph $G$ in Figure~\ref{f2} is an outerplanar graph of girth $4$ with maximum degree three and injective chromatic number $4$. Since each pair of set $\{u,v,w\}$ have a common neighbor, in every injective coloring of $G$, they must have three different colors. In the similar way, we need three different colors for the vetrices $\{x,y,z\}$. Without loss of generality, color the vertices $u,v,w$ with color $\alpha$, $\beta$ and $\gamma$, respectively. Now by devoting any permutation of these colors to vertices $x,y$ and $z$, it can be checked that in each case we need a new color for the other vertices. Therefore, bound $\Delta+1$ in Theorem~\ref{t15} is tight for outerplanar graphs with $\Delta=3$, $g=4$ and $g=3$ (see also Figure~\ref{fff}).
\begin{figure}[hbtp]
\centering
\includegraphics[scale=.6]{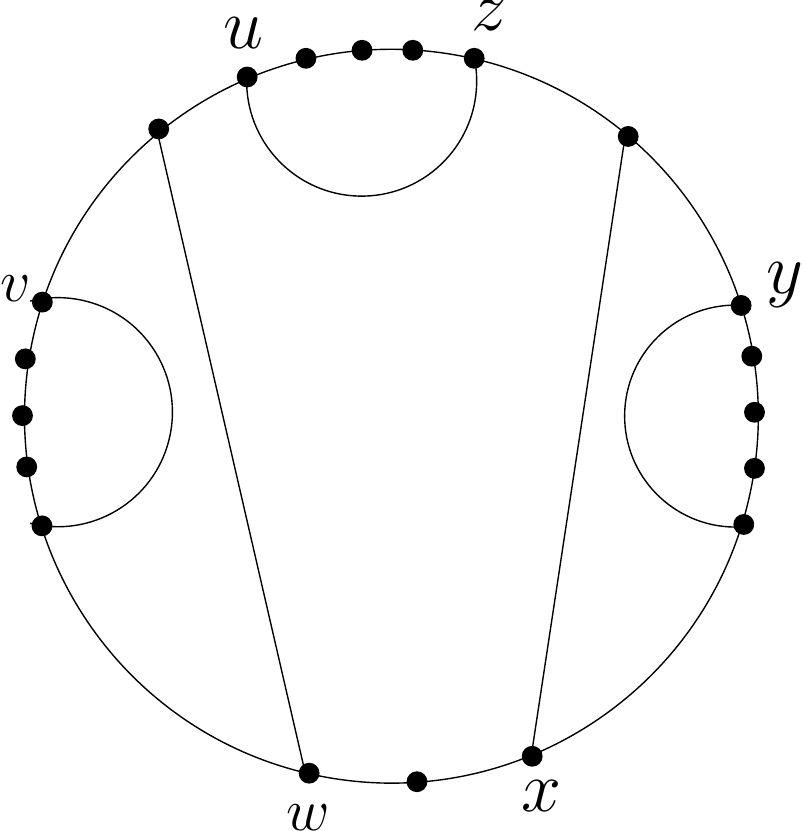}
\caption{An outerplanar graph with $\Delta=3$, $g=4$ and $\chi_i=4$.}\label{f2}
\end{figure}
\newpage
In the next theorems, we improve bound $\Delta+1$ to $\Delta$ for outerplanar graphs with $\Delta=3$ of girth greater than $4$. 
\begin{theorem}\label{t16}
If $G$ is a $2-$connected outerplanar graph with $\Delta=3$, $g\geq 5$ and no face of degree $k$, where $k\equiv 2$ {\rm(}mod $4${\rm)}, then $G$ has a $3-$injective coloring such that in every simple path of lenght three, exactly three colors appear.
\end{theorem}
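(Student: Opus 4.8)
\noindent\emph{Proof idea.} The plan is to argue by induction on $|V(G)|$, while carrying a slightly more flexible hypothesis than the one stated: that $G$ is a $2$-connected outerplanar graph with $\Delta\le 3$, $g\ge 5$, and every \emph{inner} face has length $\not\equiv 2$ (mod $4$); the outer face is left unconstrained, since deleting vertices cannot preserve a condition on it, and this relaxation costs nothing in the base case. Since $G$ is $2$-connected with $\Delta=3$ it has a chord, so its weak dual is a tree on at least two nodes; a leaf of that tree is an end face $f=[a_0a_1\dots a_{\ell-1}]$ whose unique chord, say $a_0a_{\ell-1}$, is the edge it shares with the rest of $G$, and whose other vertices $a_1,\dots,a_{\ell-2}$ all have degree $2$ in $G$. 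Because $g\ge 5$ and $\ell\not\equiv 2$ (mod $4$), we have $\ell\in\{5,7,8,9,11,\dots\}$, so the ear $a_0,a_1,\dots,a_{\ell-1}$ has $\ell-1\ge 4$ edges with $\ell-1\not\equiv 1$ (mod $4$).

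It is convenient to reformulate the target. In a $3$-coloring that is already injective, along any simple path a vertex and the vertex two steps later get different colors, so ``exactly three colors on every simple path of length three'' is equivalent to saying that no simple path carries a length-four color factor of the form $xxyy$ or $xyyx$. Hence a valid coloring, read along a long simple path, is a ``period-three'' coloring (every three consecutive vertices use all three colors) with at most a few isolated ``doubles,'' and whether it can be forced to begin and end at prescribed colors is purely a question of the path length modulo $4$ --- which is exactly why we need $\ell-1\not\equiv 1$ (mod $4$). In the base case $G$ has a single chord, hence is a theta-graph: two degree-$3$ vertices $u,v$ joined by the chord and by two disjoint ears of lengths $k_1,k_2\ge 4$, each $\not\equiv 1$ (mod $4$). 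I would put $c(u)=1$, $c(v)=2$, give the two ear-neighbours of $u$ the two colours different from $c(v)$ and the two ear-neighbours of $v$ the two colours different from $c(u)$, and fill each ear by a period-three pattern corrected by a single double when the residue of $k_i$ demands it; since every simple path of length three lies inside one ear or starts at $u$ or $v$, a short check confirms injectivity and the absence of the factors $xxyy,xyyx$.

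For the inductive step let $G'=G-\{a_1,\dots,a_{\ell-2}\}$, so that the chord $a_0a_{\ell-1}$ becomes an outer edge of $G'$. Then $G'$ is $2$-connected outerplanar, $\Delta(G')\le 3$, $g(G')\ge g(G)\ge 5$ (as $G'\subseteq G$), and every inner face of $G'$ is an inner face of $G$, hence $\not\equiv 2$ (mod $4$); since $|V(G')|<|V(G)|$ the induction hypothesis gives an injective $3$-colouring $c$ of $G'$ with the simple-path property. In $G'$ the vertex $a_0$ has degree $2$, with neighbours $a_{\ell-1}$ and one other vertex $p$, and these have distinct colours (common neighbour $a_0$); re-attaching the ear makes $\deg a_0=3$, which forces $c(a_1)$ to be the remaining colour, and symmetrically forces $c(a_{\ell-2})$. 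I would then colour $a_2,\dots,a_{\ell-3}$ so that the string $c(a_0),c(a_1),\dots,c(a_{\ell-2}),c(a_{\ell-1})$ has no equal pair at distance two and no factor $xxyy$ or $xyyx$; such a string with these two forced endpoint colours exists precisely because $\ell-1\ge 4$ and $\ell-1\not\equiv 1$ (mod $4$). Finally, the only simple paths of $G$ that are absent from $G'$ lie inside the ear or end at $a_0$ or $a_{\ell-1}$, while every simple path of $G'$ running through $a_0$ or $a_{\ell-1}$ ceases to be simple in $G$ (those vertices now have degree $3$), so outside the ear no new constraint appears and the extended colouring works; since $\chi_i(G)\ge\Delta=3$, it is also optimal.

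The main obstacle is the ear-extension step: proving that for every admissible ear length and every admissible pair of forced endpoint colours a valid interior pattern exists. The delicate point is that the two forced endpoint colours are whatever the already-fixed colouring of $G'$ dictates near the chord, and for the shortest ear (length $4$) a bad pair --- notably the same colour forced at both ends, which would make $a_1$ and $a_3$ collide through $a_2$ --- must be excluded. I expect to handle this either by choosing which leaf face to delete with some care, or by strengthening the induction hypothesis so as to guarantee, at every chord of the graph, a colouring whose values on the four relevant neighbours leave room to attach an ear (a standard reinforcement in inductions of this shape). What then remains is the finite case analysis matching the interior pattern to $\ell-1$ modulo $4$ against the few possible endpoint-colour profiles, together with the analogous explicit construction for the base-case theta-graphs.
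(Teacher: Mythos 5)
Your strategy coincides with the paper's: induct on $|V(G)|$, delete the $2$-vertices of an end face, and refill the ear with a near-periodic pattern whose feasibility is governed by the ear length modulo $4$; your reformulation of the ``exactly three colors on every simple path of length three'' condition as forbidding the color factors $xxyy$ and $xyyx$ is also correct. The gap is that the step you yourself flag as ``the main obstacle'' --- showing that for every admissible ear length and every endpoint color profile a valid interior string exists --- is where essentially all of the work lies, and you leave it as an unexecuted finite case analysis while expressing uncertainty about whether the induction hypothesis must be further strengthened. It need not be: once $v_{i+1},\dots,v_{j-1}$ are deleted, $v_i$ and $v_j$ become $2$-vertices of $G\setminus H$, so $v_{i-1}v_iv_jv_{j+1}$ is a simple path of length three there, and the hypothesis already forces exactly three colors on it; combined with injectivity ($c(v_{i-1})\neq c(v_j)$, $c(v_i)\neq c(v_{j+1})$) this pins the profile down to exactly four cases --- $c(v_i)=c(v_j)$, or $c(v_{i-1})=c(v_{j+1})$, or $c(v_{i-1})=c(v_i)$, or $c(v_j)=c(v_{j+1})$ --- and the proof then consists of exhibiting an explicit string $(s_1s_2s_3s_4s_1s_2s_3s_4\dots)$ with small terminal corrections for each of these four cases crossed with the three admissible residues of $j-i-1$ modulo $4$. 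Without that table, or a lemma guaranteeing its entries exist, the argument is not complete.

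A second, smaller point: your relaxed induction hypothesis ($\Delta\le 3$, only inner faces constrained) admits cycles, and the strengthened statement is false for $C_5$: every $3$-injective coloring of $C_5$ is a proper $3$-coloring of its distance-two pentagon, which necessarily places the two doubled colors on two adjacent pairs forming a path, i.e., produces a window $xxyy$. Your plan to bottom the induction out at theta-graphs avoids recursing into bare cycles, but you must then actually verify the theta-graph base case for all admissible pairs of arc lengths --- including those whose outer cycle has length $\equiv 2\pmod 4$, which your relaxation newly admits --- and this is again only sketched. (The paper instead colors the residual cycle $C_k$ directly for $k\ge 6$, and when the residual cycle would be $C_5$ it switches to the other end face.)
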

\begin{proof}
{
Since $\chi_i(G)\geq \Delta(G)$, it is enough to show that $\chi_i(G)\leq \Delta(G)$. We prove it by the induction on $|V(G)|$. In Figure~\ref{f8}, the $2-$connected outerplanar graph with $\Delta=3$ and $g\geq 5$ of order $8$ with an injective coloring with desired property is shown.
\begin{figure}[hbtp]
\centering
\includegraphics[scale=1.1]{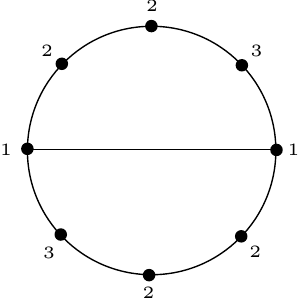}
\caption{Outerplanar graph with $\Delta=3$ and $g\geq 5$ of order $8$.}\label{f8}
\end{figure}\\
Now suppose that $G$ is a $2-$connected outerplane graph with $\Delta=3$, $g\geq 5$ and no face of degree $k$, where $k\equiv 2$ {\rm(}mod $4${\rm)} and the statement is true for all such $2-$connected outerplanar graphs of order less than $|V(G)|$. 

Let $f=[v_iv_{i+1} \ldots v_j]$ be an end face of $G$ in clockwise order and $H$ be the induced subgraph of $G$ on $2-$vertices of $f$. If $\Delta(G\setminus H)=3$, then by the induction hypothesis $G\setminus H$ has a $3-$injective coloring $c$ with colors $\{\alpha,\beta,\gamma\}$, such that every simple path of length three has exactly three colors.

 If $\Delta(G\setminus H)=2$, then we color the vertices of $G\setminus H$ as follows. If $G\setminus H=C_k$, where $k>5$ and $k\equiv 0,1$ {\rm(}mod $3${\rm)}, then color the ordered vertices $v_{i-1},v_i,v_j,v_{j+1},\ldots,v_{i-2}$ with the ordered string $(\alpha\beta\gamma\alpha\beta\gamma\ldots)$. If $k>5$ and $k\equiv 2$ {\rm(}mod $3${\rm)}, then color the ordered vertices $v_{i-1},v_i,v_j,v_{j+1},\ldots,v_{i-5}$ with the ordered string $(\alpha\beta\gamma\alpha\beta\gamma\ldots)$. Then color the vertices $v_{i-4}$, $v_{i-3}$ and $v_{i-2}$ with colors $\beta$, $\gamma$ and $\alpha$, respectively. One can check that every simple path of length three in $G\setminus H$ has exactly three colors. If $G\setminus H=C_5$, then since $|V(G)|>8$, $f=[v_iv_{i+1} \ldots v_j]$ is a cycle of length at least $6$. In this case, we consider the end face $f^\prime=[v_jv_{j+1}\ldots v_i]$ and follow the above proof when $H$ is induced subgraph of $G$ on $2-$vertices of $f^\prime$.
In the following, we extend injective coloring $c$ of $G\setminus H$ to an injective coloring of $G$ with the desired property.

If $c(v_i)=c(v_j)$, then we assign to the ordered vertices $v_{i+1},v_{i+2},\ldots,v_{j-1}$ the ordered string $(s_1s_2s_3s_4s_1s_2s_3s_4\ldots)$, where $s_1=c(v_{j+1})$. Since, $G$ has no face of degree $k$ where $k\equiv 2$ {\rm(}mod $4${\rm)}, we have following cases. If $j-i-1\equiv 1$ {\rm(}mod $4${\rm)}, then let $s_2=c(v_{i-1})$, $s_3=s_4=c(v_i)=c(v_j)$ and change the color of vertices $v_{j-2}$ and $v_{j-1}$ to $c(v_{j+1})$ and $c(v_{i-1})$, respectively. If $j-i-1\equiv 2$ {\rm(}mod $4${\rm)}, then let $s_2=s_1$, $s_3=c(v_{i-1})$ and $s_4=c(v_i)=c(v_j)$ and change the color of $v_{j-1}$ to $c(v_{i-1})$. If $j-i-1\equiv 3$ {\rm(}mod $4${\rm)}, then let $s_2=s_1$, $s_3=c(v_{i-1})$ and $s_4=c(v_i)=c(v_j)$.

If $c(v_i)\neq c(v_j)$ and $c(v_{i-1})=c(v_{j+1})$, then we assign to the ordered vertices $v_{i+1},v_{i+2},\linebreak{\ldots,v_{j-1}}$ the ordered string $(s_1s_2s_3s_4s_1s_2s_3s_4\ldots)$, where $s_1=c(v_i)$.
 If $j-i-1 \equiv 1,2$ {\rm(}mod $4${\rm)}, then $s_2=c(v_j)$ and $s_3=s_4=c(v_{i-1})=c(v_{j+1})$. In  the case  $j-i-1 \equiv 1$ {\rm(}mod $4${\rm)}, we change the color of vertices $v_{j-2}$ and $v_{j-1}$ to $c(v_i)$ and $c(v_j)$, respectively.
 If $j-i-1 \equiv 3$ {\rm(}mod $4${\rm)}, then let $s_2=c(v_{i-1})=c(v_{j+1})$ and $s_3=s_4=c(v_j)$.
 
 If $c(v_i)\neq c(v_j)$ and $c(v_{i-1})=c(v_i)$, then we assign to the ordered vertices $v_{i+1},v_{i+2},\ldots,\linebreak{v_{j-1}}$ the ordered string $(s_1s_2s_3s_4s_1s_2s_3s_4\ldots)$, where $s_1=c(v_{j+1})$.
 If  $j-i-1 \equiv 1$ {\rm(}mod~$4${\rm)}, then let $s_2=c(v_j)$, $s_3=c(v_i)=c(v_{i-1})$, $s_4=s_1$ and change the color of vertex $v_{j-1}$ to $c(v_j)$.
 If $j-i-1 \equiv 2$ {\rm(}mod $4${\rm)}, then let $s_2=c(v_j)$ and $s_3=s_4=c(v_{i-1})=c(v_i)$.
 If $j-i-1 \equiv 3$ {\rm(}mod $4${\rm)}, then we assign to the ordered vertices $v_{j-1},v_{j-2},\ldots,v_{i+1}$ the ordered string $(s_1s_2s_3s_4s_1s_2s_3s_4\ldots)$, where $s_1=c(v_j)$, $s_2=c(v_{j+1})$, $s_3=s_4=c(v_i)=c(v_{i-1})$ and change the colors of $v_{i+1}$ to $c(v_{j+1})$.
 
  If $c(v_i)\neq c(v_j)$ and $c(v_{j})=c(v_{j+1})$, then we assign to the ordered vertices $v_{j-1},v_{j-2},\ldots,\linebreak{v_{i+1}}$ the ordered string $(s_1s_2s_3s_4s_1s_2s_3s_4\ldots)$, where $s_1=c(v_{i-1})$.
 If  $j-i-1 \equiv 1$ {\rm(}mod~$4${\rm)}, then $s_2=c(v_i)$, $s_3=c(v_{j+1})=c(v_j)$, $s_4=s_1$ and change the color of $v_{i+1}$ to $c(v_{i})$.
 If $j-i-1 \equiv 2$ {\rm(}mod $4${\rm)}, then $s_2=c(v_i)$ and $s_3=s_4=c(v_{j+1})$.
 If $j-i-1 \equiv 3$ {\rm(}mod $4${\rm)}, then we assign to the ordered vertices  $v_{i+1},v_{i+2},\ldots,v_{j-1}$ the ordered string $(s_1s_2s_3s_4s_1s_2s_3s_4\ldots)$ where, $s_1=c(v_i)$, $s_2=c(v_{i-1})$ and $s_3=s_4=c(v_j)=c(v_{j+1})$ and change the color of $v_{j-1}$ to $c(v_{i-1})$.
 It can be seen that the given coloring is a $4-$injective coloring for $G$ such that every simple path of length three in $G$ has at most three colors.}
\end{proof}
In Theorem~\ref{t20}, we improve bound $\Delta+1$ in Theorem~\ref{t15} to $\Delta$ for outerplanar graph with $\Delta=3$ and $g\geq 6$. First, we need the following theorem.
\begin{theorem} {\em{\cite{Coloringtheverticesofagraphinprescribedcolors}}}\label{t2}
Let $G$ be a connected graph and $L$ be a list-assignment to the vertices, where $|L(v)|\geq \deg(v)$ for each $v\in V(G)$. If
\begin{enumerate}
\item{$|L(v)|>\deg(v)$ for some vertex $v$, or}
\item{$G$ contains a block which is neither a complete graph nor an induced odd cycle,}
\end{enumerate}
then $G$ admits a proper coloring such that the color assign to each vertex $v$ is in $L(v)$.
\end{theorem}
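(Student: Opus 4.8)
The plan is to establish the following dichotomy, which I prove by induction on $|V(G)|$: if $G$ is connected, $|L(v)|\ge\deg(v)$ for all $v$, and $G$ is \emph{not} $L$-colorable, then $|L(v)|=\deg(v)$ for every $v$ and every block of $G$ is a complete graph or an odd cycle. Conditions~(1) and~(2) in the statement are exactly the negations of these two conclusions, so the theorem follows. The base cases (a single vertex, a single edge) are trivial. First I dispose of a surplus vertex: if $|L(v_0)|>\deg(v_0)$ for some $v_0$, root a spanning tree $T$ at $v_0$, list the vertices $v_0,v_1,\dots,v_{n-1}$ so that each $v_i$ with $i\ge 1$ follows its $T$-parent, and color greedily in the reverse order $v_{n-1},\dots,v_0$. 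When $v_i$ ($i\ge1$) is reached its $T$-parent is still uncolored, so at most $\deg(v_i)-1<|L(v_i)|$ colors are forbidden, and when $v_0$ is reached at most $\deg(v_0)<|L(v_0)|$ colors are forbidden. Hence $G$ is $L$-colorable, and from here on $|L(v)|=\deg(v)$ for all $v$.

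Next I reduce to the case that $G$ is $2$-connected, by gluing colorings along a cut vertex $c$: writing $G=G_1\cup G_2$ with $V(G_1)\cap V(G_2)=\{c\}$ and both $G_i$ connected and proper, the degree of $c$ drops in each $G_i$, so $c$ has a surplus there and each $G_i$ is $L$-colorable by the previous step. The real work is to make the two colorings agree at $c$. For a piece $H$ in which $c$ has a surplus one must understand which colors $c$ can take in an $L$-coloring of $H$: by the induction hypothesis a prescribed color $a$ works unless $H$ with $L(c)$ shrunk to $\{a\}$ is forced to be a Gallai tree with matching list sizes, and since $c$ has surplus in $H$ this can fail only in limited situations --- roughly, when $c$ is attached to a clique or an odd cycle. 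Tracking these forbidden colors over all the pieces, and using that a bad block (present when $G$ is not a Gallai tree) lies inside one of the pieces and so imposes no constraint, one can pick a common color for $c$; I would carry out this bookkeeping as a self-contained lemma. The outcome is that either $G$ is $L$-colorable, or $G$ is $2$-connected.

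So suppose $G$ is $2$-connected and, by hypothesis, neither complete nor an odd cycle; I must $L$-color it. If $G$ is an even cycle $v_1v_2\cdots v_{2k}v_1$: if all lists agree, use the proper $2$-coloring; otherwise choose an edge $v_1v_{2k}$ with $L(v_1)\ne L(v_{2k})$ and a color $a\in L(v_1)\setminus L(v_{2k})$, set $\phi(v_1)=a$, and color $v_2,\dots,v_{2k}$ in order --- each of $v_2,\dots,v_{2k-1}$ then sees a single colored neighbor and has a list of size $2$, and $v_{2k}$ still has a free color because $a\notin L(v_{2k})$. If $G$ is not a cycle, it has a vertex of degree $\ge3$, and here I would use the structural fact that a $2$-connected graph that is not complete and not a cycle contains two non-adjacent vertices $x,z$ with a common neighbor $y$ for which $G-\{x,z\}$ is connected. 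Then $\deg_{G-\{x,z\}}(y)=\deg_G(y)-2$, so $y$ has a surplus of at least $2$ in the connected graph $G-\{x,z\}$; color $x$ and $z$ first --- with a common color if $L(x)\cap L(z)\ne\emptyset$, and by a short separate argument using $|L(x)|,|L(z)|\ge2$ otherwise --- then color $G-\{x,z\}$ by the spanning-tree procedure rooted at $y$. Every vertex other than $y$ retains a free color by the same counting as before, and at $y$ the colors of $x$ and $z$ together forbid at most one extra color, which is absorbed by $y$'s surplus. This contradicts non-colorability and closes the induction.

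The greedy step and the even-cycle case are routine; the genuine obstacles are (i) controlling the color forced on a cut vertex so that every branch hanging off it can be extended, and (ii) in the $2$-connected case, choosing the pair $\{x,z\}$ and handling the possibility that $L(x)$ and $L(z)$ are disjoint. The cleanest way around both, which I would adopt, is to strengthen the inductive statement so that it also covers a graph in which one distinguished vertex has had its list reduced to a single color; then the extension steps in (i) and (ii) become instances of the same induction. This is, in essence, the route of the classical proofs of this theorem, due to Vizing and to Erd\H{o}s, Rubin and Taylor.
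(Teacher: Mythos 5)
This statement is not proved in the paper at all: it is quoted, with a citation, as the classical degree-choosability theorem of Borodin and of Erd\H{o}s, Rubin and Taylor, and is used as a black box in the later arguments. So there is no in-paper proof to compare yours against; what can be said is that your outline is the standard proof of that classical theorem, and its three main pieces (greedy colouring along a spanning tree rooted at a vertex with list surplus; reduction at cut vertices; the $2$-connected case via nonadjacent $x,z$ with a common neighbour $y$ and $G-\{x,z\}$ connected) are the right ones.

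Two of your steps are only sketched, and one proposed patch does not work as stated. In the $2$-connected case, if $L(x)\cap L(z)=\emptyset$ and you colour $x$ and $z$ with distinct colours $a$ and $b$, then $y$ may lose both $a$ and $b$ from its list while its degree in $G-\{x,z\}$ drops by exactly two; the net surplus at the root $y$ is then zero and the spanning-tree greedy step can fail precisely at $y$. The standard repair is to first dispose of the case where some edge $uv$ has $L(u)\neq L(v)$: colour $u$ with a colour $a\in L(u)\setminus L(v)$ and delete $u$; the graph $G-u$ is connected by $2$-connectivity and $v$ now has strict surplus, so the greedy argument finishes. After this reduction all lists are identical, the problem becomes Brooks' theorem, and $L(x)\cap L(z)\neq\emptyset$ holds automatically. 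The cut-vertex gluing you defer to a ``bookkeeping lemma'' is likewise genuinely nontrivial --- one must bound, for each side $G_i$ of the cut vertex $c$, the set of colours of $L(c)$ achievable at $c$ in an $L$-colouring of $G_i$, and show the two achievable sets meet --- but this is standard, and your suggestion of strengthening the induction to allow one precoloured vertex is a workable way to carry it out. With those two repairs the argument closes; as written, it is an accurate roadmap rather than a complete proof.
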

\begin{theorem}\label{t20}
If $G$ is an outerplanar graph with $\Delta=3$ and $g\geq 6$, then $\chi_i(G)=\Delta$.
\end{theorem}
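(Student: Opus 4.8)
The plan is to reduce the statement to the two cases already handled and to the list-coloring tool Theorem~\ref{t2}. Since $\chi_i(G)\ge\Delta$ always holds, it suffices to produce a $3$-injective coloring, i.e. a proper $3$-coloring of the auxiliary graph $G^{(2)}$. First I would dispose of the disconnected/low-degree situations: if $\Delta(G)\le 2$ then $G$ is a union of paths and cycles of girth $\ge 6$, hence a disjoint union of paths and even-or-long cycles, and $\chi_i\le 3$ by the facts recalled after Theorem~\ref{t15}. So assume $\Delta=3$ and $g\ge 6$. If $G$ is $2$-connected, then because $g\ge 6$ no face has degree $k\equiv 2\pmod 4$ except possibly $k=6$; but a $6$-face would have all six boundary vertices pairwise within distance $3$ in a way forcing... — more carefully, I would simply invoke Theorem~\ref{t16} directly once I check that $g\ge 6$ already excludes faces of degree $k\equiv 2\pmod 4$ with $k\ge 10$, and handle $k=6$ separately by the same block-by-block argument, or observe that in a $2$-connected outerplane graph with $g\ge 6$ and $\Delta=3$ one can always pick the end face $f'$ as in the $C_5$ subcase of Theorem~\ref{t16}'s proof to avoid the bad residue. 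Thus the $2$-connected case is essentially Theorem~\ref{t16}.

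For the general (connected but not $2$-connected) case, the key idea is the structure of $G^{(2)}$. I would argue that $G^{(2)}$ has every block either a small complete graph ($K_1$, $K_2$, or $K_3$) or a graph that is neither complete nor an induced odd cycle, and that $\deg_{G^{(2)}}(v)\le 3$ for all $v$ — indeed since $\Delta(G)=3$, any vertex $v$ sees at most ... here one must be careful: a vertex of degree $2$ in $G$ whose two neighbors have degree $3$ can have up to $4$ vertices at "common-neighbor distance", so $G^{(2)}$ need not have maximum degree $3$. So the cleaner route is induction on $|V(G)|$, peeling off an end block $B$ through its cut vertex $c$, exactly mirroring the two-case split in the proof of Theorem~\ref{t15} (an end block that is an edge, giving a maximal simple path ending at a $3$-vertex; or an end block whose end face has a simple path of $2$-vertices to remove). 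In each case I remove the interior $2$-vertices, $3$-injectively $3$-color the smaller graph by induction, and then extend along the removed simple path: because $g\ge 6$ each such simple path has length $\ge 5$, so there is enough room to recolor using the periodic string $(\alpha\beta\gamma\alpha\beta\gamma\ldots)$ while matching the already-fixed colors of the $\le 3$ "boundary" vertices $v_{i-1},v_i,v_j,v_{j+1}$; the constraint $|\{c(v_{i-1}),c(v_i),c(v_j),c(v_{j+1})\}|\le 3$ coming from the "at most three colors on a simple path of length three" invariant is what makes the greedy completion always feasible.

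Alternatively, and perhaps more slickly, I would set up the list-coloring argument on $G^{(2)}$ restricted to a suitable subgraph: assign to each vertex $v$ the full palette $\{1,2,3\}$ and observe that, since $g\ge 6$, $G^{(2)}$ is "sparse enough" that every block of $G^{(2)}$ fails to be a complete graph or an induced odd cycle, or some vertex has a list strictly larger than its degree, so Theorem~\ref{t2} applies and yields the proper $3$-coloring. The main obstacle is verifying this block condition on $G^{(2)}$: I must show that $g(G)\ge 6$ forbids $G^{(2)}$ from containing a block equal to $K_4$ or to a long induced odd cycle, and that the degree bound $\deg_{G^{(2)}}(v)\le |L(v)|=3$ holds wherever it needs to — which is exactly where outerplanarity (no $K_4$- or $K_{2,3}$-subdivision) and the girth hypothesis must be combined. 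I expect this structural lemma about $G^{(2)}$ to be the technical heart of the argument, while the extension/recoloring bookkeeping, though lengthy, is routine and parallels Theorems~\ref{t15} and~\ref{t16}.
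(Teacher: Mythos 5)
There is a genuine gap, and it sits exactly where you flag your own uncertainty. Your first route reduces the $2$-connected case to Theorem~\ref{t16}, but the hypothesis $g\geq 6$ does \emph{not} exclude faces of degree $k\equiv 2\pmod 4$: faces of degree $6$, $10$, $14,\dots$ are all permitted when the girth is at least $6$, so Theorem~\ref{t16} simply does not apply, and ``handle $k=6$ separately'' leaves the bulk of the work undone. Worse, the mod-$4$ obstruction in Theorem~\ref{t16} is precisely the reason your second route (extend along the removed simple path by a periodic string $(\alpha\beta\gamma\alpha\beta\gamma\ldots)$ matching prescribed colors at both ends) cannot be waved through as ``routine'': with only three colors, a periodic pattern on a path whose two ends carry fixed color constraints fails for certain residues of the path length, which is why Theorem~\ref{t16} needs its elaborate case analysis and its extra hypothesis. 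You assert the greedy completion ``is always feasible'' but give no mechanism that beats this parity obstruction. Your third route (apply Theorem~\ref{t2} to all of $G^{(2)}$ with constant lists $\{1,2,3\}$) founders on the degree issue you yourself identify: a $2$-vertex with two $3$-neighbors has degree $4$ in $G^{(2)}$, and a $3$-vertex can have degree up to $6$ there, so $|L(v)|\geq\deg_{G^{(2)}}(v)$ fails and the ``structural lemma'' you defer is not just technical bookkeeping --- it is false as stated.

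For comparison, the paper's proof threads between these obstacles by localizing the list-coloring step: it takes a minimal counterexample, picks an end face $f=[v_iv_{i+1}\ldots v_j]$ (which has degree at least $6$ because $g\geq 6$), deletes the set $H$ of $2$-vertices of $f$, colors $G\setminus H$ by minimality, and then applies Theorem~\ref{t2} only to $G^{(2)}[H]$ with lists of \emph{available} colors. The point is that within $H$ each vertex has few $G^{(2)}[H]$-neighbors (the components are paths or cycles), the boundary vertices $v_{i+1},v_{j-1}$ retain at least one available color because $\deg(v_i)=\deg(v_j)=3$ bounds the number of already-colored injective constraints, and the interior vertices retain three; this is what makes the list sizes dominate the degrees so that Theorem~\ref{t2} completes the coloring with no parity analysis at all. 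If you want to salvage your proposal, this localized use of Theorem~\ref{t2} is the missing idea.
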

\begin{proof}
{
Since $\chi_i(G)\geq \Delta$, it is enough to show that $\chi_i(G)\leq \Delta$.
Let $G$ be a minimal counterexample for this statement. That means $G$ is an outerplane graph with $\Delta=3$, $g\geq 6$ and $\chi_i(G)\geq \Delta+1$, such that every proper subgraph of $G$ has a $\Delta-$injective coloring. Obviously $\delta(G)\geq 2$. Now consider an end face $f=[v_iv_{i+1}\ldots v_j]$ in an end block $B$ of $G$ in clockwise order, where $v_1$ is the vertex cut of $G$ belongs to $B$. Since $\Delta=3$ and $g\geq 6$, the degree of face $f$ is at least $6$ and the degree of $v_i$ and $v_j$  are three.
If $\Delta(G\setminus H)=3$, then by the minimality  of $G$, we have 
$\chi_i(G \setminus H)\leq \Delta(G \setminus H)\leq \Delta(G)$. Also, if $G\setminus H$ is a cycle, then $\chi_i(G\setminus H)\leq 3= \Delta$.\\
Now, we extend the $\Delta-$injective coloring of $G\setminus H$ to a $\Delta-$injective coloring of $G$, which contradict our assumption. Each of the vertices $v_i$ and $v_j$ has at most $\Delta-1=2$ neighbors except $v_{i+1}$ and $v_{j-1}$, respectively.
 Hence, for each of vertices $v_{i+1}$ and $v_{j-1}$ there is at least one available color. Also, among the colored vertices in $G \setminus H$, the only forbbiden colors for vertices $v_{i+2}$ and $v_{j-2}$ are colors of the vertices $v_i$ and $v_j$, respectively. The other vertices have three available colors. Now consider induced subgraph of $G^{(2)}$ on the vertices of $H$, denoted by $G^{(2)}[H]$, and list of available colors for each vertex of $H$. The components of $G^{(2)}[H]$ are some paths or cycles satisfying the assumption of Theorem~\ref{t2}. Thus, we have a proper $\Delta-$coloring for $G^{(2)}[H]$  using the available colors which is a $\Delta-$injective coloing of $H$ as desired.}
\end{proof}

Now we are ready to determine the injective chromatic number of $2-$connected outerplanar graphs with maximum degree and girth greater than three. We prove this fact by two different methods for the cases $\Delta=4$ and $\Delta\geq 5$.
\begin{theorem}\label{t7}
If $G$ is a $2-$connected outerplanar graph with $\Delta=4$ and $g\geq 4$, then $G$ has a $4-$injective coloring $c$ such that for every adjacent vertices $v$ and $u$ of degree three with $N(v)=\{u,v_1,v_2\}$ and $N(u)=\{v,u_1,u_2\}$, $\{c(u),c(v_1),c(v_2)\}\neq \{c(v),c(u_1),c(u_2)\}$.
\end{theorem}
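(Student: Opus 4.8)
The plan is to mimic the inductive/minimal-counterexample scheme already used for Theorems~\ref{t15}--\ref{t20}, but now carrying the stronger "no repeated color-triple across an edge of two $3$-vertices" invariant as part of the induction hypothesis. Take $G$ to be a $2$-connected outerplane graph with $\Delta=4$, $g\geq 4$, and suppose the statement holds for all $2$-connected outerplanar graphs of smaller order (with the small base cases — cycles and the first few $2$-connected outerplanar graphs with a $4$-vertex — handled directly). Pick an end face $f=[v_iv_{i+1}\ldots v_j]$ in an end block $B$ of $G$, and let $H$ be the subgraph induced on the $2$-vertices of $f$; since $g\geq 4$ the internal vertices $v_{i+1},\ldots,v_{j-1}$ form a path of length at least $3$. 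The graph $G\setminus H$ is still $2$-connected outerplanar; if $\Delta(G\setminus H)=4$ apply the induction hypothesis, and if $\Delta(G\setminus H)\le 3$ use Theorem~\ref{t15} (or handle the cycle case by hand) to obtain a suitable $4$-injective coloring $c$ with the extra property on $G\setminus H$. I would then extend $c$ across $H$.

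The core of the argument is the extension step, and here the key observation is a counting one: each of $v_i,v_j$ has degree at most $4$, so outside $\{v_{i+1}\}$ (resp.\ $\{v_{j-1}\}$) the vertex $v_i$ (resp.\ $v_j$) has at most $3$ already-colored neighbors, leaving at least one admissible color for $v_{i+1}$ and for $v_{j-1}$; the only colored vertex at distance $2$ from $v_{i+2}$ outside $H$ is $v_i$ (similarly $v_j$ for $v_{j-2}$), so those vertices have at least $3$ admissible colors; every other internal vertex of the path has all $4$ colors available. So, exactly as in the proof of Theorem~\ref{t20}, one forms $G^{(2)}[H]$, which is a disjoint union of paths and cycles, equips each vertex with its list of admissible colors, checks the hypothesis of Theorem~\ref{t2} ($|L(v)|>\deg_{G^{(2)}[H]}(v)$ somewhere, or a block that is not a complete graph or induced odd cycle — true since the path is long enough and $g\ge4$ makes the relevant distance-$2$ graph not a short odd cycle), and extracts a proper list-coloring. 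That yields a $4$-injective coloring of $G$.

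The extra obligation — no edge $uv$ of two $3$-vertices with $\{c(u),c(v_1),c(v_2)\}=\{c(v),c(u_1),c(u_2)\}$ — has to be maintained when the new colors are chosen. The only edges that could newly violate it are those incident to $H$ or to $v_i,v_j$; for an edge with at least one endpoint of degree $2$ (i.e.\ inside $H$) the condition is vacuous, so only the two "boundary" edges $v_iv_{i+1}$ and $v_jv_{j-1}$, in the case $v_{i+1}$ or $v_{j-1}$ happens to have degree $3$, matter. Since in the list-coloring step those vertices each retain at least two free colors (degree in $G^{(2)}[H]$ is small), I would simply argue that at most one choice is forbidden by the triple-condition, so a valid choice remains; this is the same "change the color of the last vertex" maneuver used repeatedly in Theorems~\ref{t15} and \ref{t16}. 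I expect this bookkeeping — verifying in each residue class of the path length that the lists are large enough to simultaneously satisfy injectivity and avoid the bad triple at the two boundary edges — to be the main obstacle, though it is routine case analysis rather than a conceptual difficulty; the auxiliary "at most three colors / exactly three colors on short simple paths" type of invariant is exactly what makes the $\Delta=4$ propagation in the companion Theorem~\ref{t14} go through, and the same device is what this theorem is being set up to feed.
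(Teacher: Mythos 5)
Your overall scheme -- induction on the order, deletion of the $2$-vertices $H$ of an end face, and re-coloring $H$ by list-coloring $G^{(2)}[H]$ via Theorem~\ref{t2} -- is the same as the paper's. But there are two concrete gaps in how you maintain the extra invariant. First, your fallback for the case $\Delta(G\setminus H)\leq 3$ does not work: Theorem~\ref{t15} produces a $4$-injective coloring in which every simple path of length three gets at most three colors, which is a different invariant from the one you must carry, namely that adjacent $3$-vertices $u,v$ have $\{c(u),c(v_1),c(v_2)\}\neq\{c(v),c(u_1),c(u_2)\}$. A pair of adjacent $3$-vertices of $G$ lying away from $f$ keeps its entire neighborhood inside $G\setminus H$, so if the Theorem~\ref{t15} coloring violates the triple condition there, no later extension can repair it. The paper avoids this by splitting on the degrees of the end vertices $v_i,v_j$ of the end face: when both are $3$-vertices, $\Delta(G\setminus H)$ stays equal to $4$ and the induction hypothesis applies; the only situation where $\Delta(G\setminus H)$ can drop to $3$ is when an end vertex of $f$ is the unique $4$-vertex, and there the paper deletes the $2$-vertices of two end faces at once so that $G\setminus H$ is a cycle and observes that, because $g\geq 4$, no adjacent pair of $3$-vertices survives to threaten the invariant.

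Second, you have misidentified the edges at which the triple condition must be re-verified after coloring $H$. Since $f$ is an end face, all of $v_{i+1},\ldots,v_{j-1}$ are $2$-vertices of $G$, so the edges $v_iv_{i+1}$ and $v_{j-1}v_j$ that you single out are vacuous for the condition. The edges that actually matter are those joining $v_i$ or $v_j$ to a neighbor \emph{outside} the face (for instance $v_jv_{j+1}$ with $\deg(v_j)=\deg(v_{j+1})=3$): the triple at $v_j$ contains the freshly colored $c(v_{j-1})$, and the induction hypothesis says nothing about $v_j$ because its degree in $G\setminus H$ was only $2$. This is exactly where the paper does its careful analysis with the set $\{c(v_i),c(v_j),c(v_{j+1}),c(v_{j+2}),c(v_s)\}$, forcing $c(v_{j-1})$ outside that set when it has only three colors. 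Your "at most one choice is forbidden, two colors remain" heuristic is the right kind of argument, but as written it is aimed at the wrong vertices, so the verification of the strengthened statement is not actually carried out.
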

\begin{proof}
{Since $\chi_i(G)\geq \Delta(G)=4$, it is enough to show that $\chi_i(G)\leq 4$. We prove it by the induction on $|V(G)|$. In Figure~\ref{f7}, the $2-$connected outerplanar graph with $\Delta=4$ and $g\geq 4$ of order $8$  with an injective coloring of desired property is shown.
\begin{figure}[hbtp]
\centering
\hspace*{3cm}
\includegraphics[scale=.6]{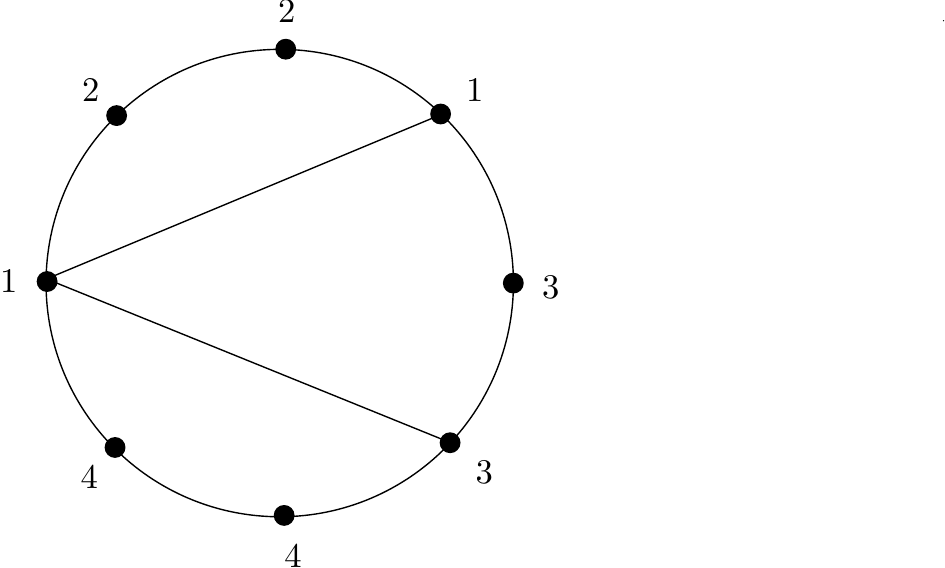}
\caption{$2-$connected outerplanar graph with $\Delta=4$ and $g\geq 4$ of order $8$.}\label{f7}
\end{figure}

Now suppose that $G$ is a $2-$connected outerplane graph with $\Delta=4$, $g\geq 4$ and the statement is true for all $2-$connected outerplanar graphs with $\Delta=4$ and $g\geq 4$ of order less than $|V(G)|$.

Let $f=[v_iv_{i+1} \ldots v_j]$ be an end face of $G$ in clockwise order.
If $\deg(v_i)=\deg(v_j)=3$, then consider induced subgraph $H$ on $2-$vertices of face $f$. Thus, $G\setminus H$ is a $2-$connected outerplane graph with $\Delta(G\setminus H)=4$ and $g(G\setminus H)\geq 4$. Hence, by the induction hypothesis, $G\setminus H$ has a $4-$injective coloring such that for every adjacent vertices $v$ and $u$ of degree three with $N(v)=\{u,v_1,v_2\}$ and $N(u)=\{v,u_1,u_2\}$, $\{c(u),c(v_1),c(v_2)\}\neq \{c(v),c(u_1),c(u_2)\}$. 
If there are exactly four colors in $\{c(v_{i-1}),c(v_i),c(v_j),c(v_{j+1})\}$, then consider graph $G^{(2)}[H]$ and list of available colors for each vertex of $H$. Graph $G^{(2)}[H]$ satisfy the assumption of Theorem~\ref{t2}. Thus, we have a $\Delta-$coloring for $G^{(2)}[H]$ which is a $\Delta-$injective coloring of $G$.
 If there are at most three colors in $\{c(v_{i-1}),c(v_i),c(v_j),c(v_{j+1})\}$, then color $v_{i+1}$ with one of its colors not in $\{c(v_{i-1}),c(v_i),c(v_j),c(v_{j+1})\}$ and color $v_{j-1}$ with one of its available colors such that $c(v_{i+1})\neq c(v_{j-1})$. Then color the other vertices of $H$ with one of their available colors similar to above.
It can be easily seen that for every adjacent vertices $v$ and $u$ of degree three with $N(v)=\{u,v_1,v_2\}$ and $N(u)=\{v,u_1,u_2\}$, $\{c(u),c(v_1),c(v_2)\}\neq \{c(v),c(u_1),c(u_2)\}$.

Now suppose that each face of $G$ has an end vertex of degree $4$. We have two following cases.\\
$(i)$ There is an end face $f$ with one end vertex of degree $4$ and the other one of degree less than $4$.\\
$(ii)$ For each end face $f$, two its end vertices are of degree $4$.\\
In the former case, suppose that $G$ has an end face $f=[v_i v_{i+1}\ldots v_j]$, where $\deg(v_i)=4$ and $\deg(v_j)=3$. Consider induced subgraph $H$ on $2-$vertices of face $f$. If $\Delta(G\setminus H)=4$, then by the induction hypothesis, $G\setminus H$ has a $4-$injective coloring such that for every adjacent vertices $v$ and $u$ of degree three with $N(v)=\{u,v_1,v_2\}$ and $N(u)=\{v,u_1,u_2\}$, $\{c(u),c(v_1),c(v_2)\}\neq \{c(v),c(u_1),c(u_2)\}$. Now we extend the $4-$injective coloring of \linebreak{$G\setminus H$} to $G$.
If $\deg(v_{j+1})=3$, then suppose that $v_s$ is the other neighbor of $v_{j+1}$ except $v_j$ and $v_{j+2}$. If there are exactly three colors in $\{c(v_i),c(v_j),c(v_{j+1}),c(v_{j+2}),c(v_s)\}$, then color vertex $v_{j-1}$ with one of its colors not in $\{c(v_i),c(v_j),c(v_{j+1}),c(v_{j+2}),c(v_s)\}$ and color the other vertices of $H$ with one of their available colors as explained in above. If $|\{c(v_i),c(v_j),c(v_{j+1}),c(v_{j+2}),c(v_s)\}|=4$ or $\deg(v_{j+1})\neq 3$, then by Theorem~\ref{t2} color the vertices of $H$ with one of their available colors such that  obtained coloring is a $4-$injective coloring of $G$.

If $\Delta(G\setminus H)=3$, then by assumption $(i)$, there is another unique end face, say $f^\prime$, with a common neighbor with $f$.
Consider induced subgraph $H$ on $2-$vertices of face $f$ and  $f^\prime$. Thus, $G\setminus H$ is a cycle and $\chi_i(G\setminus H)\leq 3$. Now each vertices of $H$ has at least two available colors. Hence, by applying Theorem~\ref{t2}, we obtain a $4-$injective coloring of $G$. Note that, since $g(G)\geq 4$, in this case there is no two adjacent vertices of degree three.\\
In the latter case, consider the induced subgraph $H$ on $2-$vertices of $f=[v_iv_{i+1}\ldots v_j]$, where $\deg(v_i)=\deg(v_j)=4$. Since $\deg(v_i)=4$, $G\setminus H$ has an end face $f^\prime$ with two ends of degree $4$. Hence, $\Delta(G\setminus H)=4$ and  by the induction hypothesis,  $G\setminus H$ has a $4-$injective coloring such that for every adjacent vertices $v$ and $u$ of degree three with $N(v)=\{u,v_1,v_2\}$ and $N(u)=\{v,u_1,u_2\}$, $\{c(u),c(v_1),c(v_2)\}\neq \{c(v),c(u_1),c(u_2)\}$. Now by Theorem~\ref{t2}, color the vertices of $H$ with their available colors such that  obtained coloring is a $4-$injective coloring of $G$. Obviously, for every adjacent vertices $v$ and $u$ of degree three with $N(v)=\{u,v_1,v_2\}$ and $N(u)=\{v,u_1,u_2\}$, $\{c(u),c(v_1),c(v_2)\}\neq \{c(v),c(u_1),c(u_2)\}$. 
}
\end{proof}

Now we consider $2-$connected outerplanar graphs with $\Delta=5$ and $g\geq 4$. First, we need to prove the following theorem on the structure of $2-$connected outerplanar graphs.
\begin{theorem}\label{t9}
If $G$ is a $2-$connected outerplanar graph, then $G$ has an end face \linebreak{$f=[v_i v_{i+1} \ldots v_j]$}, where either $\deg(v_i)< 5$ or $\deg(v_j)<5$. 
\end{theorem}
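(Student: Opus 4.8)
The plan is to argue through the \emph{inner dual tree} $T$ of $G$, whose vertices are the inner faces of $G$ and whose edges join two faces that share a chord. As usual for a $2$-connected outerplane graph, Euler's formula gives $|E(T)|=|V(T)|-1$ while $T$ is clearly connected, so $T$ is a tree; in particular two inner faces share at most one chord. It is also routine to check that the leaves of $T$ are precisely the end faces of $G$ (if $f$ has a single chord, every vertex of $f$ that misses that chord is incident only with the two boundary edges of $f$ at that vertex, hence is a $2$-vertex). Dispose of the degenerate situations first: if $G$ has at most two inner faces, then every vertex has degree at most $3$, so any end face already has an end vertex of degree $<5$. Henceforth $T$ has at least three vertices.

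Second, I would record the local picture at a vertex. Fix $v\in V(G)$; since $v$ lies on the outer cycle it has exactly two boundary edges, and reading the edges at $v$ in rotational order the $\deg(v)-1$ inner faces at $v$ appear as a path $P_v$ in $T$ (consecutive faces of $P_v$ sharing a chord at $v$), whose two extreme faces are the two inner faces containing a boundary edge at $v$. Hence, if $f$ is an end face and $v$ is an end vertex of $f$, then $v$ is incident with exactly one chord of $f$ --- the unique chord of the leaf face $f$ --- so $f$ is one of the two extreme faces of $P_v$, and $P_v$ has $\deg(v)-1$ faces.

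Third, the main argument. Suppose for contradiction that every end face of $G$ has \emph{both} end vertices of degree at least $5$. Pick a longest path $f_0,f_1,\dots,f_d$ in $T$ (so $d\ge 2$). Then $f_0$ is a leaf, hence an end face; let $xy$ be its unique chord, so its end vertices are $x$ and $y$, and $f_1$ is the face on the other side of $xy$. By hypothesis $\deg(x),\deg(y)\ge 5$, so $P_x$ and $P_y$ each have at least four faces, and by the previous step each reads $f_0,f_1,\dots$; write $x_2$ and $y_2$ for their respective third faces and $y_3$ for the fourth face of $P_y$. Then $x_2\ne y_2$, since otherwise $f_1$ and $x_2$ would share two distinct chords (one at $x$ and one at $y$). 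At most one of $x_2,y_2$ equals $f_2$, so after swapping the roles of $x$ and $y$ if necessary we may assume $y_2\ne f_2$. Now consider $y_3,y_2,f_1,f_2,\dots,f_d$. It is a genuine simple path in $T$: $y_2$ and $y_3$ are vertices of $P_y$ distinct from $f_0,f_1$; $y_2$ is adjacent to $f_1$ in $T$, so it differs from $f_2,\dots,f_d$; and the $T$-path from $f_1$ to $y_3$ begins with the edge $f_1y_2\ne f_1f_2$, so by uniqueness of paths in a tree $y_3\notin\{f_1,\dots,f_d\}$. This path has length $d+1$, contradicting the maximality of $f_0,\dots,f_d$.

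The two places that need genuine care are the structural inputs --- that $T$ is a tree, which is exactly what forbids two faces from sharing two chords, and that the inner faces at a vertex form a path in $T$ whose two ends carry the boundary edges at that vertex --- and the final bookkeeping showing that the exhibited walk of length $d+1$ has no repeated vertex. The latter is the crux of the proof, and it ultimately reduces to the basic fact that in a tree two paths leaving a common vertex along different first edges never meet again; everything else is a short rotation/counting argument.
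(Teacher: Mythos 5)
Your proof is correct, and it takes a genuinely different route from the one in the paper. The paper first contracts the simple paths on end faces so that every end face becomes a triangle, fixes the Hamiltonian labelling of the outer cycle, and then runs an explicit marching algorithm that moves from one end face to another with strictly increasing indices (outerplanarity, i.e.\ the absence of a $K_4$-subdivision, is what keeps the indices increasing), terminating at an end face with an end vertex of degree at most $4$. You instead work in the weak dual tree $T$: you identify end faces with leaves of $T$, observe that the $\deg(v)-1$ inner faces around a vertex $v$ form a path in $T$ whose extremes carry the two boundary edges at $v$, and then derive a contradiction from a longest path $f_0,\dots,f_d$ by showing that if both end vertices of the leaf face $f_0$ had degree at least $5$, one of the two rotation paths $P_x,P_y$ would let you prolong $f_1,\dots,f_d$ by two dual vertices. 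The key verifications --- that $T$ is a tree so two inner faces share at most one chord (which gives $x_2\neq y_2$), and that the extension $y_3,y_2,f_1,\dots,f_d$ is a simple path by uniqueness of tree paths --- are all present and sound, and your argument makes transparent exactly where the constant $5$ enters (you need four faces in the rotation at an end vertex to step two faces away from the leaf). Compared with the paper's proof, yours dispenses with the contraction step and with the informally specified algorithm (in particular the paper's step ``if $f_k$ is not an end face then there exists an end face $f'$ in $f_k$'' is left vague), at the cost of setting up the dual-tree machinery; the paper's version stays closer to the embedding and to the Hamiltonian indexing that the rest of the paper uses.
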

\begin{proof}
{First replace every simple path in boundary of each end face of $G$ with a path of length two and name this graph $G^\prime$. Graph $G^\prime$ is also a $2-$connected outerplane graph that each end face of $G^\prime$ is of degree three. Now, let $C:v_1v_2\ldots v_n$ be a Hamilton cycle of $G^\prime$ in clockwise order and $f=[v_i v_{i+1} v_{i+2}]$ be an end face of $G^\prime$.
If $G^\prime$ is a cycle, then we are done. Hence, suppose that $\Delta(G^\prime)\geq 3$ and  $f=[v_iv_{i+1}v_{i+2}]$ is an end face of $G^\prime$ where, $\deg(v_i)$ and $\deg(v_{i+2})$ are at least 5. In what follows, we present an algorithm that find an end face of $G^\prime$ such that the degree of at least one of its end vertices is less than $5$. Since by assumption $\deg(v_{i+2})\geq 5$, $v_{i+2}$ has at least two other neighbors except $v_i$, $v_{i+1}$ and $v_{i+3}$, named $v_{i^\prime}$ and $v_{j^\prime}$ where, $j^\prime< i^\prime$. 
\begin{algorithm}[H]
\caption{} \label{a2}
\begin{algorithmic}[1]
\STATE $k=0$.
\STATE $f_0=[v_i v_{i+1} v_{i+2}]$.
\STATE If $f_k$ is an end face of $G$, then do steps $4$ to $7$, respectively.
\STATE Suppose that ${v_{_L}}_{_{_{f_k}}}=v_t$ and ${v_{_R}}_{_{_{f_k}}}=v_{t+2}$. Let ${v_{j^\prime}}_{_k}$  and ${v_{i^\prime}}_{_k}$ be another neighbors of $v_{t+2}$ except $v_t$, $v_{t+1}$ and $v_{t+3}$, where $j^\prime_k<i^\prime_k$.
\STATE If there is no ${v_{i^\prime}}_{_k}$ or ${v_{j^\prime}}_{_k}$, then stop the algorithm and give the face $f_k$ as output of the algorithm.
\STATE $k=k+1$.
\STATE $f_k=[{v_{_R}}_{_{f_{k-1}}} {v_{_R}}_{_{f_{k-1}}+1} \ldots {v_{j^\prime}}_{_{k-1}}]$ and go to step three.
\STATE If $f_k$ is not an end face of $G$, then there exist an end face $f^\prime$ in $f_k$. Do steps $9$ and $10$, respectively.
\STATE $k=k+1$.
\STATE $f_k=f^\prime$ and go to step three.
\end{algorithmic}
\end{algorithm}
Note that the indices of neighbors of all vertices $v_k$, $i+3\leq k< i^\prime$, is less than $i^\prime$; otherwise there is a subdivision of $K_4$ on $G^\prime$ and it is a contradiction with the assumption that $G^\prime$ is an outerplanar graph. Since in each step of the algorithm the indices of vertices in $f_k$ are increasing, the algorithm terminates and its output, say $f=[v_s v_{s+1}v_{s+2}]$, is an end face in $G^\prime$ where $v_{s+2}$ has degree at most $4$. Therefore, by returning the contracted paths to $G^\prime$; we have an end face of $G$ that one of its ends is of degree less than $5$.}
\end{proof}
\begin{theorem}\label{t14}
If $G$ is a $2-$connected outerplanar graph with $\Delta\geq 5$ and $g \geq 4$, then $\chi_i(G)=\Delta$.
\end{theorem}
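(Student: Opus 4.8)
The plan is to prove $\chi_i(G)\le\Delta$ (the reverse inequality $\chi_i(G)\ge\Delta$ is immediate) by induction on $|V(G)|$, using Theorem~\ref{t9} to pick a convenient end face and then list colouring to extend a colouring of a smaller graph. By Theorem~\ref{t9}, $G$ has an end face $f=[v_iv_{i+1}\ldots v_j]$ with, after relabelling the two ends if necessary, $\deg(v_i)\le 4$. Let $H=\{v_{i+1},\ldots,v_{j-1}\}$ be the set of $2$-vertices on the boundary of $f$; since $g\ge 4$ we have $\deg(f)\ge 4$, hence $|H|\ge 2$. Every edge of $f$ incident to a vertex of $H$ lies on the outer face, so the only edge of $f$ that can be shared with another bounded face is $v_iv_j$; consequently $G':=G\setminus H$ is again a $2$-connected outerplanar graph, and clearly $g(G')\ge g(G)\ge 4$.

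First I would colour $G'$ with at most $\Delta$ colours. If $\Delta(G')\ge 5$ this is the induction hypothesis; if $\Delta(G')=4$ it is Theorem~\ref{t7}; if $\Delta(G')=3$ it is Theorem~\ref{t15}, giving $\chi_i(G')\le 4\le\Delta$; and if $\Delta(G')=2$ then $G'$ is a cycle, so $\chi_i(G')\le 3\le\Delta$. This also settles the small graphs, for which $\Delta(G')<5$ is forced. Fix such a $\Delta$-injective colouring $c$ of $G'$.

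Then I would extend $c$ across $H$ by list colouring the conflict graph $G^{(2)}[H]$, where the list $L(v)$ of a vertex $v\in H$ is the set of colours that $c$ does not use on any vertex of $G'$ sharing a $G$-neighbour with $v$. Since each vertex of $H$ is a $2$-vertex whose two neighbours lie on the path $v_i\ldots v_j$, two vertices of $H$ are adjacent in $G^{(2)}$ precisely when their indices differ by $2$; hence $G^{(2)}[H]$ is a disjoint union of two paths, one on the odd- and one on the even-indexed vertices of $H$. A direct count yields $|L(v_k)|\ge\Delta-1$ whenever $i+2\le k\le j-2$, and $|L(v_{i+1})|\ge\Delta-\deg(v_i)+1$, $|L(v_{j-1})|\ge\Delta-\deg(v_j)+1$; in all cases $|L(v)|\ge\deg_{G^{(2)}[H]}(v)$. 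Every \emph{interior} vertex $v_k$ with $i+2\le k\le j-2$ satisfies $|L(v_k)|\ge\Delta-1\ge 4>2\ge\deg_{G^{(2)}[H]}(v_k)$, so any component of $G^{(2)}[H]$ meeting such a vertex fulfils hypothesis~(1) of Theorem~\ref{t2}. The only component that need not meet an interior vertex is a $K_2$ of the form $\{v_{i+1},v_{i+3}\}$ with $v_{i+3}=v_{j-1}$, which happens only when $\deg(f)=5$; there $\deg(v_i)\le 4$ forces $|L(v_{i+1})|\ge\Delta-3\ge 2$, so hypothesis~(1) again holds. Thus Theorem~\ref{t2} provides a proper $L$-colouring of $G^{(2)}[H]$, which together with $c$ is a $\Delta$-injective colouring of $G$, finishing the induction.

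The hard part will be the bookkeeping in this last step: verifying that for the end face produced by Theorem~\ref{t9} the lists are everywhere at least as large as the $G^{(2)}[H]$-degrees and strictly larger somewhere in each component, and isolating the single tight configuration — a $5$-face whose two anchor vertices both have degree $\ge 5$ — as exactly the case excluded by Theorem~\ref{t9}. The remaining points ($2$-connectedness of $G\setminus H$, the girth inequality, and the path structure of $G^{(2)}[H]$) should be routine.
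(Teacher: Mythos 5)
Your argument is essentially the paper's own proof: induction on $|V(G)|$, Theorem~\ref{t9} to locate an end face one of whose anchors has degree at most $4$, colouring $G\setminus H$ by the induction hypothesis or Theorem~\ref{t7}, and extending over the $2$-vertices $H$ of that face by applying Theorem~\ref{t2} to $G^{(2)}[H]$ with the lists of available colours. Your bookkeeping (the list sizes, the isolation of the tight $K_2$ component on a $5$-face, and the structure of $G^{(2)}[H]$ as two index-parity paths) is correct and in fact more explicit than the paper's; the only quibble is that when $\deg(f)=4$ the two components of $G^{(2)}[H]$ are isolated vertices rather than the configurations you enumerate, but these trivially satisfy hypothesis~(1) of Theorem~\ref{t2} as well.
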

\begin{proof}
{Since $\chi_i(G)\geq \Delta(G)$, it is enough to show that $\chi_i(G)\leq \Delta(G)$. We prove it by the induction on $|V(G)|$. In Figure~\ref{f10}, the $2-$connected outerplanar graph with $\Delta\geq 5$ and $g\geq 4$ of order $10$ with a $\Delta-$injective coloring is shown.
\begin{figure}[ht]
\centering
\includegraphics[scale=.65]{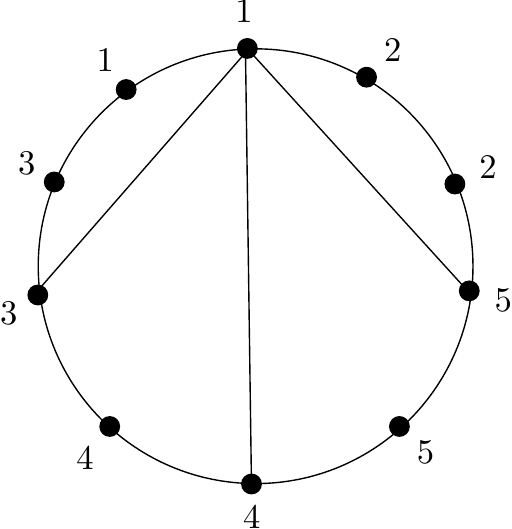}
\caption{$2-$connected outerplanar graph with $\Delta\geq 5$ and $g\geq 4$ of order $10$.}\label{f10}
\end{figure}\\
Now suppose that $G$ is a $2-$connected outerplane graph with $\Delta\geq 5$, $g\geq 4$ and the statement is true for all $2-$connected outerplanar graphs with $\Delta\geq 5$ and $g\geq 4$ of order less that $|V(G)|$.
%
By Theorem~\ref{t9}, $G$ has an end face $f$ of degree at least $4$ such that at least one of its end vertices is of degree at most $4$. Now consider the induced subgraph $H$ on $2-$vertices of end face $f$. If $\Delta(G\setminus H)\geq 5$, then by induction hypothesis, $\chi_i(G\setminus H)=\Delta(G\setminus H)\leq \Delta(G)$. If  $\Delta(G\setminus H)=4$, then by Theorem~\ref{t7}, $G\setminus H$ has a $4-$injective coloring.
Now consider the end face $f=[v_iv_{i+1}\ldots v_j]$ and suppose that $\deg(v_i)\leq \Delta$ and $\deg(v_j)\leq 4$. Since $\Delta\geq 5$, the vertices $v_{i+1}$ and $v_{j-1}$ have at least one and two available colors, respectively. The other vertices of $H$ has at least three available colors. Now consider the graph $G^{(2)}[H]$ and list of available colors for each
vertex of H. It can be easily seen that $G^{(2)}[H]$ is union of paths and isolated vertices, which satisfy the assumption of Theorem~\ref{t2}.
Hence, $G^{(2)}[H]$ can be colored by at most $\Delta$ colors and the obtained coloring is a $\Delta-$injective coloring of $H$.}
\end{proof}
\begin{remark}
We believe that, laboriously, the results of Theorems~\ref{t16}, \ref{t7} and \ref{t14} can be generalized for the outerplanar graphs  containing some cut vertices.
\end{remark}

{\bibliographystyle{plain}
\bibliography{./bibliography}
\end{document}